\theoremstyle{plain}
\numberwithin{equation}{section}
\newtheorem{theorem}{Theorem}[section]
\newtheorem*{theorem*}{Theorem}
\newtheorem{proposition}[theorem]{Proposition}
\newtheorem{lemma}[theorem]{Lemma}
\newtheorem{corollary}[theorem]{Corollary}
\theoremstyle{definition}
\newcommand{\sslash}{\mathbin{/\mkern-6mu/}}
\newenvironment{altenumerate}
{\begin{list}
		{\textup{(\theenumi)} }
		{\usecounter{enumi}
			\setlength{\labelwidth}{0pt}
			\setlength{\labelsep}{2pt}
			\setlength{\leftmargin}{0pt}
			\setlength{\itemsep}{\the\smallskipamount}
			\renewcommand{\theenumi}{\roman{enumi}}
	}}
	{\end{list}}
\begin{document}
	\title[A remark on a theorem of Narasimhan and Ramanan]{A remark on a theorem of Narasimhan and Ramanan}

	\author[Jagadish Pine]{Jagadish Pine}
	
	\subjclass[2010]{14C20, 14H60, 14F05 }
	\keywords{Seshadri constant, moduli space of semistable vector bundles, theta divisor}
	
	\date{15 November, 2024}
	\maketitle
	\begin{abstract}
	In this short note, we provide an alternative proof of a notable theorem by Narasimhan and Ramanan. The theorem states that the moduli space of $S$-equivalence classes of semistable rank $2$ vector bundles over a curve $X$ of genus $2$ with trivial determinant is isomorphic to $\mathbb{P}^3$. Our proof relies on a criterion by Bauer and Szemberg, which characterizes projective spaces among smooth Fano varieties using Seshadri constants.
	\end{abstract}

	\section{Introduction}\label{intro}
	Let $X$ be an irreducible smooth, projective algebraic curve over the complex numbers $\mathbb{C}$. Throughout the note we will assume that the genus of the curve $X$ is $2$. Let $SU_{X}(2)$ be the moduli space of S-equivalence classes of semistable rank $2$ vector bundles over $X$ with trivial determinant. These are the (rank$=2$, degree is even) only instances of non-coprime rank and degree where the moduli space is nonsingular. A well-known theorem by Narasimhan and Ramanan states that the moduli space $SU_{X}(2)$ is isomorphic to the $3$-dimensional projective space $\mathbb{P}^{3}$ \cite[Theorem 2, \S 7]{MR0242185}.\\ 
	
	 The proof proceeds through several steps as follows. Let $\text{Pic}^1(X)$ be the variety of isomorphism classes of degree $1$ line bundles on $X$. Then the natural embedding $X \hookrightarrow \text{Pic}^1(X)$ defines the $\theta$ divisor in $\text{Pic}^1(X)$. Let $L_{2\theta}$ be the line bundle associated to the diviosr $2\theta$. Any vector bundle $W$ in $SU_{X}(2)$ can be obtained as an extension in $Ext^1(\zeta, \zeta^{-1})$ for some $\zeta \in \text{Pic}^1(X)$. The space $\mathbb{P}(H^1(X, \zeta^{-2}))$ can be identified with the set of positive divisors in $\text{Pic}^1(X)$ that pass through the point $\zeta$ and are linearly equivalent to $2\theta$. Consequently, the bundle $W$ determines a positive divisor $C_W \in \mathbb{P}(H^0(\text{Pic}^1(X), L_{2\theta}))$. Since $C_W$ only depends on the S-equivalance class of $W$, this correspodence defines a map $D:SU_{X}(2) \rightarrow \mathbb{P}(H^0(\text{Pic}^1(X), L_{2\theta}))$. The proof concludes by showing that the map $D$ is bijective, thereby establishing an isomorphism.\\ 
	
	It may be useful to consider a new perspective on this intriguing result. In this brief note, we present an alternative proof using Seshadri constants. These constants have previously been applied in moduli problems, such as 
	 characterizing Jacobians of curves among principally polarized abelian varieties in lower dimensions  \cite{MR2112583}. We believe that similar techniques could prove valuable for other moduli problems as well. We begin by recalling their definition: Let $L$ be a nef line bundle on a projective variety $Y$. 
	For a point $y \in Y$, The \textit{Seshadri constant} of $L$ at $y$ is defined as
	$$\varepsilon(Y,L,y):= \inf\limits_{\substack{y \in C}} \frac{L\cdot
		C}{{\rm mult}_{y}C} \;\: ,$$
	where the infimum is taken over all irreducible and reduced curves $C \subset Y$ passing through $y$. Here 
	$L\cdot C$ denotes the intersection multiplicity, and ${\rm mult}_y C$ denotes the 
	multiplicity of the curve $C$ at $y$. If the variety is clear from the context, we denote the Seshadri constant simply by $\varepsilon(L,y)$.\\
	
	 We will rely on the following key criterion, which characterizes projective spaces among smooth Fano varieties in terms of Seshadri constants.
	\begin{theorem}\cite[Theorem 2]{BAUER20092134}
		\label{crucialthm}
		Let $Y$ be a smooth Fano variety of dimension $n$ such that there exists a point $y \in Y$ with 
		\[
		\epsilon(-K_Y, y) = n+1
		\]
		Then $Y$ is the projective space $\mathbb{P}^n$.
	\end{theorem}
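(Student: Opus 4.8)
The plan is to combine Mori's theory of rational curves on Fano varieties with the elementary upper bound on Seshadri constants that it produces, and then to quote the characterization of projective space in terms of minimal rational curves. Since $-K_Y$ is ample by the Fano hypothesis, $\epsilon(-K_Y,y)$ is a genuine Seshadri constant, and I would first establish the universal bound $\epsilon(-K_Y,y')\le n+1$ for \emph{every} $y'\in Y$. This follows from Mori's theorem that through every point of a smooth Fano $n$-fold there passes a rational curve $C$ with $0<-K_Y\cdot C\le n+1$: because $\mathrm{mult}_{y'}C\ge 1$, the defining infimum gives $\epsilon(-K_Y,y')\le (-K_Y\cdot C)/\mathrm{mult}_{y'}C\le n+1$. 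Thus the hypothesis $\epsilon(-K_Y,y)=n+1$ asserts that the Seshadri constant takes the largest value it possibly can.

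Next I would upgrade the hypothesis from the special point $y$ to a very general point. The function $y'\mapsto\epsilon(-K_Y,y')$ attains its maximum at a very general point and can only drop on a countable union of proper subvarieties; in particular $\epsilon(-K_Y,y)\le\epsilon(-K_Y,y_0)$ for $y_0$ very general. Combined with the universal bound this forces $\epsilon(-K_Y,y_0)=n+1$ as well. Unwinding the definition at $y_0$, every irreducible reduced curve $C$ through $y_0$ satisfies $-K_Y\cdot C\ge (n+1)\,\mathrm{mult}_{y_0}C\ge n+1$; applied to rational curves, this shows that every rational curve through the very general point $y_0$ has anticanonical degree at least $n+1$.

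The final step is to invoke the characterization of projective space due to Cho--Miyaoka--Shepherd-Barron (in the streamlined form of Kebekus): a smooth Fano $n$-fold on which the minimal rational curves through a general point have anticanonical degree $\ge n+1$ is isomorphic to $\mathbb{P}^n$. Here I would use that minimal rational curves through a general point are smooth there, so the multiplicity is genuinely $1$ and the degree bound of the previous step applies cleanly, while Mori's bound pins the minimal degree down to exactly $n+1$. This yields $Y\cong\mathbb{P}^n$, consistent with the equality case of the volume inequality $\epsilon(-K_Y,y)^n\le (-K_Y)^n$, which already forces $(-K_Y)^n\ge (n+1)^n=(-K_{\mathbb{P}^n})^n$.

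I expect the main obstacle to be the second and third steps taken together: making rigorous the passage from the Seshadri constant at an arbitrary point $y$ to a statement about minimal rational curves through a general point, and matching the precise hypotheses of the Cho--Miyaoka--Shepherd-Barron theorem (irreducibility and smoothness of the minimal rational curve, the behaviour of multiplicity at the chosen point, and the ``general point'' requirement). The Seshadri inequality is convenient because it controls all curves through $y_0$ simultaneously, but the semicontinuity of Seshadri constants and the smoothness of minimal covering families of rational curves are the two technical pillars on which the argument rests.
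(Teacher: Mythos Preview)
The paper does not prove this theorem at all: it is quoted verbatim as \cite[Theorem~2]{BAUER20092134} and used as a black box in \S1 and \S3. There is therefore no proof in the paper to compare your proposal against. That said, your outline is essentially the argument Bauer and Szemberg give in the cited reference: Mori's bound on the anticanonical degree of rational curves yields the universal inequality $\epsilon(-K_Y,y')\le n+1$, lower semicontinuity of Seshadri constants transfers the hypothesis to a very general point, and the Cho--Miyaoka--Shepherd-Barron/Kebekus characterization of $\mathbb{P}^n$ finishes the job. So your proposal is correct in spirit, but it is a reconstruction of the external reference rather than of anything contained in the present note.
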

For any smooth Fano variety of dimension $n$, it is known that [\cite{BAUER20092134}, Corollary 1.3]
\[
\epsilon(-K_Y, y) \le n+1
\]  
for all $y \in Y$. 

In this note we will prove the following :
\begin{theorem}
	\label{mainthm}
	Let $x$ be any point of $SU_{X}(2)$. Then $\epsilon(-K_{SU_{X}(2)}, x) \ge 4$.
\end{theorem}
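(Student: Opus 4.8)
The plan is to avoid any curve-by-curve analysis and instead reduce the bound to the Seshadri constant of the ample generator of the Picard group. Write $M := SU_X(2)$. First I would record the properties of $M$ that are available independently of the isomorphism we are reproving: $M$ is a smooth projective threefold (the non-singular case highlighted in the introduction); by the Drezet--Narasimhan description of the Picard group of moduli of semistable bundles, $\mathrm{Pic}(M)\cong\mathbb{Z}$ is generated by an ample line bundle $\mathcal{L}$ (the determinant, or theta, line bundle) and the canonical class satisfies $-K_M=4\mathcal{L}$, so in particular $M$ is Fano; and $\mathcal{L}$ is globally generated, being the pullback of $\mathcal{O}(1)$ under the morphism $D\colon M\to\mathbb{P}\bigl(H^0(\mathrm{Pic}^1(X),L_{2\theta})\bigr)$ of the introduction. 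None of these facts uses that $D$ is an isomorphism. Since Seshadri constants are homogeneous in the line bundle, $\epsilon(-K_M,x)=\epsilon(4\mathcal{L},x)=4\,\epsilon(\mathcal{L},x)$, so it suffices to prove $\epsilon(\mathcal{L},x)\ge 1$ for every $x\in M$.

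This in turn follows from the general principle that a globally generated line bundle has Seshadri constant at least $1$ at every point. To see it, take an irreducible reduced curve $C\subset M$ through $x$. Since $\mathcal{L}$ is ample, $D$ does not contract $C$, so $D(C)$ is a curve; hence a general hyperplane through $D(x)$ does not contain $D(C)$, and its pullback $E$ is an effective divisor in $|\mathcal{L}|$ that contains $x$ but not $C$. Then
\[
\mathcal{L}\cdot C \;=\; E\cdot C \;\ge\; (E\cdot C)_x \;\ge\; \mathrm{mult}_x(E)\cdot\mathrm{mult}_x(C)\;\ge\;\mathrm{mult}_x(C),
\]
so $\frac{\mathcal{L}\cdot C}{\mathrm{mult}_x C}\ge 1$. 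Taking the infimum over all such $C$ yields $\epsilon(\mathcal{L},x)\ge 1$, and therefore $\epsilon(-K_M,x)\ge 4$, as claimed.

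I expect the only subtle point to be the bookkeeping in the first paragraph: one must be careful that the inputs $-K_M=4\mathcal{L}$, that $\mathcal{L}$ generates $\mathrm{Pic}(M)$, and that $\mathcal{L}$ is globally generated are genuinely drawn from the general theory of moduli of vector bundles (valid for every genus) rather than deduced from $M\cong\mathbb{P}^3$, since otherwise the argument would be circular; this is where I would spend care on references. Granting this, the proof of Theorem~\ref{mainthm} is complete. Finally, combining Theorem~\ref{mainthm} with the inequality $\epsilon(-K_M,x)\le n+1=4$ recalled just after Theorem~\ref{crucialthm} gives $\epsilon(-K_M,x)=4$, so Theorem~\ref{crucialthm} applies and yields $M\cong\mathbb{P}^3$, which is the desired alternative proof of the theorem of Narasimhan and Ramanan.
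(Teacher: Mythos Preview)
Your argument is correct and, at the level of the Seshadri computation, coincides with the paper's: both reduce to the fact that a globally generated ample line bundle has Seshadri constant at least $1$ by producing, for each curve $C$ through $x$, a divisor $E\in|\Theta|$ with $x\in E$ and $C\not\subset E$, and then invoking the local multiplicity inequality $E\cdot C\ge \mathrm{mult}_x(E)\,\mathrm{mult}_x(C)$ together with $-K=4\Theta$. Your packaging via $\epsilon(-K,x)=4\,\epsilon(\Theta,x)$ is exactly the paper's computation, phrased more conceptually.

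The one substantive divergence is how you justify that $\Theta$ is globally generated. You appeal to the Narasimhan--Ramanan map $D$ from the introduction and take $\Theta=D^*\mathcal{O}(1)$; the paper deliberately avoids this and instead gives a self-contained proof of base-point-freeness (Proposition~\ref{basepointprop} plus Beauville's criterion), precisely so that the argument does not borrow any part of the original Narasimhan--Ramanan construction. Your route is shorter but leans on the existence and well-definedness of $D$ (and on identifying $D^*\mathcal{O}(1)$ with the Picard generator rather than a higher multiple); the paper's route is longer but makes the ``alternative proof'' genuinely independent of the Narasimhan--Ramanan machinery. You already flag this bookkeeping as the delicate point, and that instinct is exactly right.
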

Thus as a consequence of Theorem \eqref{crucialthm} we will have $SU_{X}(2) \cong \mathbb{P}^3$.\\

There exists a generalized $\Theta$ divisor on $SU_{X}(2)$ which is the descendant of an ample line bundle arising from the GIT quotient. In $\S 2$, after providing a brief overview of the construction of 
$SU_{X}(2)$, and its key properties, we will demonstrate the base point freeness of the generalized $\Theta$ divisor. In $\S 3$ we will prove Theorem \eqref{mainthm}. Once the base point freeness of the generalized $\Theta$ divisor is established, 
the proof of Theorem \eqref{mainthm} is straightforward. 

\section{The moduli space $SU_{X}(2)$, and the generalized theta divisor}
There exists a Quot scheme $\text{Quot}(V \otimes \mathcal{O}_X, P(t))$ which parameterizes all coherent sheaves $\mathcal{E}$ on $X$ which are quotients of $V \otimes \mathcal{O}_X$ for some vector space $V$ over $\mathbb{C}$, and the sheaf $\mathcal{E}$ has Hilbert polynomial $P(t)$ with respect to a fixed ample line bundle on $X$. Let $q : V \otimes \mathcal{O}_X \rightarrow \mathcal{E}$ be the quotient map. The scheme $\text{Quot}(V \otimes \mathcal{O}_X, P(t))$ is a fine moduli space i.e., there exists an universal coherent sheaf $\mathcal{F}$ on $X \times \text{Quot}(V \otimes \mathcal{O}_X, P(t))$ with a quotient map $V \otimes \mathcal{O}_{X \times \text{Quot}(V \otimes \mathcal{O}_X, P(t))} \rightarrow \mathcal{F}$. Let $R^{ss}$ be the open subset of $\text{Quot}(V \otimes \mathcal{O}_X, P(t))$ which contains all the semistable locally free sheaves $\mathcal{E}$ such that the induced map $H^0(q) : V \rightarrow H^0(\mathcal{E})$ is an isomorphism. The moduli space $SU_{X}(2)$ is the GIT quotient $R^{ss} \sslash SL(V)$. \\

The moduli space $SU_{X}(2)$ is a smooth, projective variety with a known dimension of $(r^2-1)\cdot (g-1)=3$. For a proof of smoothness using Luna's \'etale slice theorem we refer to [\cite{MR1418944}, Theorem 4]. We also have $\text{Pic}(SU_{X}(2)) \cong \mathbb{Z}$ \cite[Theorem B, P. 55]{MR0999313}. The generator $\Theta$ is an ample line bundle known as the generalized $\Theta$ divisor.  The canonical line bundle $K_{SU_{X}(2)}$ has the following relation with $\Theta$ \;\cite[Theorem F.]{MR0999313}
\begin{equation}
	\label{cantheta}
-K_{SU_{X}(2)} \cong \Theta^4
\end{equation} 
 In the remainder of this section, we will demonstrate that the divisor $\Theta$ is base point free. While this is a known fact, we will provide an alternative proof for completeness. To do so, we require the following:
\begin{lemma}\cite[Lemma on P. 56]{mumford2008abelian}
	\label{ramanujam trick}
	Let $Z$ be an irreducible quasi projective variety over $\mathbb{C}$. Then there exists an irreducible curve passing through any two points $P, Q$ in $Z$.
\end{lemma}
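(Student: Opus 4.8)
The plan is to reduce to the projective case and then to descend in dimension one step at a time by taking general hyperplane sections, the one subtlety being that the linear system of hyperplanes through $P$ and $Q$ has base points, so Bertini's theorem cannot be applied to it directly; the remedy is to blow up the two points first.

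First I would pass to the projective case. Replacing $Z$ by its closure in a projective space, it is enough to produce an irreducible projective curve $C$ through $P$ and $Q$ in that closure, since then $C \cap Z$ is a dense open subset of $C$ containing both points, hence an irreducible quasi-projective curve in $Z$ through $P$ and $Q$. One may also assume $P \neq Q$: the case $P = Q$ follows by joining $P$ to any other point of $Z$, using that $\dim Z \ge 1$. So from now on assume $Z$ is projective and irreducible of dimension $n$, with $P \neq Q$.

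Next I would induct on $n$, the case $n = 1$ being trivial with $C = Z$. For $n \ge 2$, let $\sigma \colon \widetilde Z \to Z$ be the blow-up of $Z$ along the reduced subscheme $\{P,Q\}$, with exceptional divisors $E_P$ and $E_Q$. Then $\widetilde Z$ is again an irreducible projective variety of dimension $n$; $\sigma$ is an isomorphism over $Z \setminus \{P,Q\}$; and $E_P$, $E_Q$ are effective divisors of dimension $n-1 \ge 1$. Embed $\widetilde Z$ in some $\mathbb{P}^M$ by a very ample line bundle. By Bertini's theorem (we work over $\mathbb{C}$ and $\dim \widetilde Z \ge 2$), a general hyperplane $H$ cuts $\widetilde Z$ in an irreducible divisor $D$ of dimension $n-1$. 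Since $E_P$ and $E_Q$ are positive-dimensional projective subvarieties of $\mathbb{P}^M$, \emph{every} hyperplane meets each of them, so $D$ meets $E_P$ and $E_Q$. For general $D$ we have $D \neq E_P$ and $D \neq E_Q$, so the irreducible variety $D$ is not contained in $E_P \cup E_Q$; hence $\sigma|_D \colon D \to \sigma(D)$ is birational and $\sigma(D)$ is an irreducible subvariety of $Z$ of dimension $n-1$ containing $\sigma(D \cap E_P) = \{P\}$ and, likewise, $Q$. Applying the induction hypothesis to $\sigma(D)$ then yields an irreducible curve through $P$ and $Q$, completing the induction.

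I expect the main obstacle to be exactly the base-locus issue flagged at the outset: Bertini's irreducibility theorem does not apply to the pencil of hyperplanes through $P$ and $Q$ on $Z$ itself, and the purpose of the blow-up is to convert this into the very ample — in particular base-point-free — situation on $\widetilde Z$ where Bertini does apply; the price is checking that a general hyperplane section of $\widetilde Z$ still meets both exceptional divisors and is not absorbed into their union, which is immediate since $E_P$ and $E_Q$ are positive-dimensional. One could equivalently avoid the blow-up language and work on $Z$ with the linear system of degree-$k$ hypersurface sections vanishing at $P$ and $Q$ for $k \gg 0$, whose proper transform on $\widetilde Z$ is very ample.
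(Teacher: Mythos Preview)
The paper does not supply its own proof of this lemma; it is merely quoted from Mumford's \emph{Abelian Varieties} with a citation, so there is nothing in the paper to compare against. Your argument is correct and is in fact essentially the classical proof given there (the so-called Ramanujam trick): reduce to the projective case, blow up the two points to remove the base locus, and apply Bertini on the blow-up to cut down the dimension. The only points worth flagging are that you silently use that the exceptional fibre over a possibly singular point still has dimension $n-1$ (true, since it is the projectivized tangent cone) and that $\widetilde Z$ remains irreducible and projective; both are standard and your write-up handles them adequately.
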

\begin{proposition}
	\label{basepointprop}
	Let $E$ be a semistable vector bundle of rank $2$ and trivial determinant on $X$. Then there exists a line bundle $M \in Pic^1(C)$ such that $H^{0}(M \otimes E) = 0$
\end{proposition}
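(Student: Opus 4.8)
The plan is to prove that the Zariski-closed set $Z := \{\, M \in \mathrm{Pic}^1(X) : H^0(M\otimes E)\neq 0 \,\}$ is a \emph{proper} subvariety of $\mathrm{Pic}^1(X)$; any $M$ in its nonempty complement then satisfies $H^0(M\otimes E)=0$. Since $M\otimes E$ has rank $2$ and degree $2$, Riemann--Roch gives $\chi(M\otimes E)=0$, so $Z$ is equally the locus where $H^1(M\otimes E)\neq 0$. The elementary device I will use repeatedly is that any nonzero section of $M\otimes E$ factors uniquely as $\mathcal{O}_X\hookrightarrow\mathcal{O}_X(D)\hookrightarrow M\otimes E$ with $D\geq 0$ effective and $\mathcal{O}_X(D)$ saturated in $M\otimes E$; twisting by $M^{-1}$ this yields a saturated sub-line-bundle $M^{-1}(D)\hookrightarrow E$ of degree $\deg D-1$, with quotient $M(-D)$ since $\det E=\mathcal{O}_X$. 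Semistability of $E$ then forces $\deg D\leq 1$; if moreover $E$ is stable it forces $D=0$, so that $M^{-1}$ itself is a sub-bundle of $E$ with quotient $M$.

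Suppose first that $E$ is \emph{not} stable. Then $E$ has a sub-line-bundle $L$ of degree $0$, and $\det E=\mathcal{O}_X$ gives a short exact sequence $0\to L\to E\to L^{-1}\to 0$; from its long exact sequence, $M\in Z$ implies $H^0(ML)\neq 0$ or $H^0(ML^{-1})\neq 0$, each of which puts $M$ on a translate of the embedded curve $X\subset\mathrm{Pic}^1(X)$. Hence $Z$ lies in a union of two curves, so it is proper. From now on I assume $E$ is stable and, for contradiction, that $Z=\mathrm{Pic}^1(X)$. By the first paragraph, every $N\in\mathrm{Pic}^{-1}(X)$ then fits into a nonsplit extension $0\to N\to E\to N^{-1}\to 0$. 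Fix one such $N_0$; Riemann--Roch gives $\dim\mathrm{Ext}^1(N_0^{-1},N_0)=h^1(N_0^{\otimes 2})=3$, so $[E]$ is a point of $\mathbb{P}\bigl(\mathrm{Ext}^1(N_0^{-1},N_0)\bigr)\cong\mathbb{P}^2$.

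I will next describe, for a general $N_1\in\mathrm{Pic}^{-1}(X)$, which extension classes admit $N_1$ as a sub-bundle. The space $\mathrm{Hom}(N_1,N_0^{-1})=H^0(N_0^{-1}N_1^{-1})$ is one-dimensional for general $N_1$ (it is nonzero because its Euler characteristic is $1$), say spanned by $\varphi$, and a standard lifting argument shows that $N_1\hookrightarrow E$ if and only if the pulled-back class $\varphi^{*}[E]\in\mathrm{Ext}^1(N_1,N_0)$ vanishes. Via Serre duality, $\varphi^{*}\colon\mathrm{Ext}^1(N_0^{-1},N_0)\to\mathrm{Ext}^1(N_1,N_0)$ is transpose to the multiplication map $H^0(K_XN_0^{-1}N_1)\xrightarrow{\ \cdot\,\varphi\ }H^0(K_XN_0^{-2})$, which is injective with $1$-dimensional image spanned by $\varphi\psi$, where $\psi$ generates $H^0(K_XN_0^{-1}N_1)$. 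Consequently the condition ``$N_1\hookrightarrow E$'' cuts out a hyperplane $H_{N_1}$ in $\mathbb{P}\bigl(\mathrm{Ext}^1(N_0^{-1},N_0)\bigr)$, namely the one dual to $[\varphi\psi]\in\mathbb{P}\bigl(H^0(K_XN_0^{-2})\bigr)$ — equivalently, to the effective divisor $\mathrm{div}(\varphi)+\mathrm{div}(\psi)$ in the net $|K_XN_0^{-2}|\cong\mathbb{P}^2$.

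To finish, I observe that these hyperplanes move as freely as they can: the assignment $N_1\mapsto\mathrm{div}(\varphi)+\mathrm{div}(\psi)\in|K_XN_0^{-2}|$ is dominant. Indeed, for a general effective divisor $D$ of degree $4$ in $|K_XN_0^{-2}|$ and any effective $A\leq D$ of degree $2$, the line bundle $N_1:=N_0^{-1}\mathcal{O}_X(-A)$ gives $\mathrm{div}(\varphi)=A$ and $\mathrm{div}(\psi)=D-A$, so $D$ is attained (six times over, as $A$ runs through the $\binom{4}{2}$ sub-divisors). Hence I may pick $N_1,N_2,N_3$ with $H_{N_1}\cap H_{N_2}\cap H_{N_3}=\varnothing$, contradicting $[E]\in\bigcap_i H_{N_i}$; therefore $Z\subsetneq\mathrm{Pic}^1(X)$. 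I expect the stable case to be the only real obstacle: one has to exclude a stable bundle into which \emph{every} line bundle of degree $-1$ embeds, and a straight dimension count on the incidence variety $\{(E,N):N\hookrightarrow E\}$ falls just short of doing so; the argument above succeeds by confining $[E]$ to a fixed $\mathbb{P}^2$ and verifying that the hyperplane conditions coming from different $N_1$ are in general position, which in the end rests on the geometry of degree-$2$ divisors on the genus-$2$ curve $X$.
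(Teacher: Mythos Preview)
Your argument is correct and follows a genuinely different path from the paper's. The strictly semistable case is handled the same way in both, but for stable $E$ the paper argues by \emph{degeneration}: it picks an irreducible curve $C$ in the Quot scheme $R^{ss}$ joining $E$ to some $\mathcal L\oplus\mathcal L^{-1}$, takes the $M\in\mathrm{Pic}^1(X)$ that works at the strictly semistable endpoint, and then shows $H^0(E\otimes M)=0$ by a semicontinuity argument sharpened with Grauert's theorem (one embeds $(p_2)_*(\mathcal U_C\otimes p_1^*M)$ into the locally free sheaf $(p_2)_*(\mathcal U_C\otimes p_1^*M^{\otimes 3})$ and concludes it is a torsion subsheaf of a locally free sheaf, hence zero on all of $C$). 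Your proof is instead direct and extension-theoretic: fixing one presentation $0\to N_0\to E\to N_0^{-1}\to 0$, you translate ``$N_1\hookrightarrow E$'' into a hyperplane condition in $\mathbb P\bigl(\mathrm{Ext}^1(N_0^{-1},N_0)\bigr)\cong\mathbb P^2$ and show, via the dominance of $N_1\mapsto\mathrm{div}(\varphi)+\mathrm{div}(\psi)$ onto the net $|K_XN_0^{-2}|$, that these hyperplanes have no common point. What you gain is that you avoid the Quot scheme, the connecting-curve lemma, and Grauert's theorem altogether; what you pay is a computation tied to the specific numerics of rank $2$ and genus $2$ (the $\mathbb P^2$ of extensions and the $\mathbb P^2$ linear system $|K_XN_0^{-2}|$), whereas the paper's degeneration method is indifferent to these numbers and simply reduces the stable case to the easy strictly semistable one.
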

\begin{proof}We employ degeneration techniques. In Case 1, we establish the result for any strictly semistable bundle. In Case 2, we extend the proof to stable bundles by degenerating them into semistable ones.
	
	\textbf{Case 1.} Let $E \in SU_{X}(2)$ be a semistable but not stable bundle. Then $E$ is S-equivalent to $\mathcal{L} \oplus \mathcal{L}^{-1}$, where $\text{deg}(\mathcal{L}) = 0$. The degree $1$ line bundles which have a nontrivial section are of the form $\mathcal{O}_{X}(x)$ for some $x \in X$. Thus they are contained inside the image of the natural map $X \rightarrow Pic^1(X)$. Since $\{\mathcal{L} \otimes M : M \in Pic^1(X)\} = Pic^1(X)$, and $\text{dim}\; Pic^1(X) =2$, we will have $H^0(\mathcal{L} \otimes M) = 0$ for all $\mathcal{L} \otimes M$ avoiding the image of $X$ in $Pic^1(X)$. Similarly for $\mathcal{L}^{-1}$ we can choose $M' \in Pic^1(X)$ in the complement of a subvariety of dimension at most $1$ such that $H^0(\mathcal{L}^{-1} \otimes M') = 0$. Thus there a common $M$ in $Pic^1(X)$ for which $H^0(\mathcal{L} \otimes M) = 0 = H^0(\mathcal{L}^{-1} \otimes M)$. Hence $H^0(X, (\mathcal{L} \oplus \mathcal{L}^{-1}) \otimes M) =0$.
	
	\textbf{Case 2.} Let $E \in SU_{X}(2)$ be a stable bundle. Let $\pi : R^{ss} \rightarrow R^{ss} \sslash SL(V) = SU_{X}(2)$ be the GIT quotient morphism. The scheme $R^{ss}$ is an irreducible smooth quasiprojective variety. For a proof of irreducibility, refer to \cite[Remark 5.5, P. 140]{MR546290}. We take a preimage of $E$ in $R^{ss}$. By an abuse of notation we will denote it by $E$. By Lemma \eqref{ramanujam trick} there exists an irreducible curve $C$ in $R^{ss}$ connecting $E$ to a semistable bundle of the form $\mathcal{L} \oplus \mathcal{L}^{-1}$. Let $\mathcal{U} \rightarrow X \times R^{ss}$ be the universal family. We restrict the universal family to $X \times C$, denoted by $\mathcal{U}_{C} \rightarrow X \times C$. So $\mathcal{U}_{C}$ is a family of locally free sheaves such that there exist $x, x_0 \in C$ with $(\mathcal{U}_{C})_x \cong E$, and $(\mathcal{U}_{C})_{x_0} \cong \mathcal{L} \oplus \mathcal{L}^{-1}$. Let $p_1$, and $p_2$ be the projections from $X \times C$ to $X$, and $C$ respectively. 
	
Let $\mathcal{U}'$ be the family $\mathcal{U}_{C} \otimes p_1^* M$, where $M \in Pic^1(X)$ is the line bundle chosen in case $1$. The projection $p_2 : X \times C \rightarrow C$ is projective, and flat. The map $C \rightarrow \mathbb{Z}$ defined by $c \rightarrow \text{dim}H^0(X, \mathcal{U}'_c)$ is upper semicontinuous. At $x_0$ we have $H^0(X, \mathcal{U}'_{x_0}) \cong H^0(X, (\mathcal{L} \oplus \mathcal{L}^{-1}) \otimes M) =0$. Hence there exists an open set $V$ in $C$ containing $x_0$ such that $H^0(X, \mathcal{U}'_v) = 0$ for all $v \in V$. To complete the proof, it suffices to show that $V=C$. \\
	
	By Riemann-Roch we will have $h^0(X, M^{\otimes 2}) \ge \text{deg}(M^{\otimes 2}) + 1 - g = 2 + 1 - 2 = 1$. A section in $H^0(X, M^{\otimes 2})$ will define the short exact sequence $0 \rightarrow \mathcal{O}_{X} \rightarrow M^{\otimes 2} \rightarrow T$, where $T$ is a torsion sheaf. By  tensoring with $M$, and pulling back the short exact sequence to $X \times C$ we will get $0 \rightarrow p_1^*M \rightarrow p_1^*M^{\otimes 3} \rightarrow T'$. Tensoring with $\mathcal{U}_{C}$, and applying $(p_2)_*$ we get the injection $0 \rightarrow (p_2)_* (p_1^*M \otimes \mathcal{U}_{C}) \rightarrow (p_2)_* (p_1^*M^{\otimes 3} \otimes \mathcal{U}_{C})$. For $c \in C$, using Serre duality we have $$H^1(X, M^{\otimes 3}\otimes \mathcal{U}_c) \cong H^0(X, M^{\otimes -3}\otimes \mathcal{U}_c^{\vee} \otimes K_{X})$$ Since $(M^{\otimes -3}\otimes \mathcal{U}_c^{\vee} \otimes K_{X})$ is a semistable vector bundle of degree $-2$, it can not have any section. Thus $H^1(X, M^{\otimes 3}\otimes \mathcal{U}_c) =0$ for all $c \in C$. Hence $\text{dim} \; H^0(X, M^{\otimes 3}\otimes \mathcal{U}_c)$ remains constant. By Grauert's theorem the sheaf $(p_2)_* (p_1^*M^{\otimes 3} \otimes \mathcal{U}_C)$ is locally free. Since the sheaf $(p_2)_* (p_1^*M \otimes \mathcal{U}_C)$ restricted to the open set $V$ is zero, it is a torsion sheaf contained in $(p_2)_* (p_1^*M^{\otimes 3} \otimes \mathcal{U}_C)$. Hence $(p_2)_* (p_1^*M \otimes \mathcal{U}_C) =0$. Since the fiber of the sheaf $(p_2)_* (p_1^*M \otimes \mathcal{U}_C)$ at $x$ is $H^0(X, E \otimes M)$, we have $H^0(X, E \otimes M) =0$.

\end{proof}
The base locus of the $\Theta$-divisor is characterized by the following criterion of Beauville \cite[see \S 3]{beauville1994vector} $$\{E \in SU_{X}(2) : H^{0}(M \otimes E) \ne 0 \;\text{for all}\; M \; \text{in Pic}^1(X)\}$$ Thus we will have the following
\begin{corollary}
	\label{basepoint}
	The divisor $\Theta$ on the moduli space $SU_{X}(2)$ is base point free.
\end{corollary}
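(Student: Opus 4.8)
The plan is to read the corollary off from Proposition~\ref{basepointprop} via Beauville's description of the base locus, so almost no new work is needed. First recall what must be shown: the divisor $\Theta$ is base point free precisely when the common zero locus of all global sections of the associated line bundle is empty. By the criterion of Beauville quoted just above (see \cite[\S 3]{beauville1994vector}), the base locus of $\Theta$ on $SU_{X}(2)$ is exactly
\[
B \;=\; \{\, E \in SU_{X}(2) \;:\; H^0(X, M \otimes E) \neq 0 \ \text{ for every } M \in \text{Pic}^1(X) \,\}.
\]
Hence it suffices to prove that $B$ is empty, i.e.\ that for each S-equivalence class in $SU_{X}(2)$ there is at least one degree-one line bundle killing all sections of the corresponding twist.

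Next I would take an arbitrary point of $SU_{X}(2)$ and choose a polystable representative $E$ of its S-equivalence class: either a stable bundle, or $\mathcal{L}\oplus\mathcal{L}^{-1}$ with $\deg\mathcal{L}=0$ in the strictly semistable case. Proposition~\ref{basepointprop} applies to such an $E$ and produces $M \in \text{Pic}^1(X)$ with $H^0(X, M\otimes E)=0$. I would then remark that vanishing of $H^0(X, M\otimes -)$ is insensitive to the choice of representative within the class: any semistable bundle $E'$ in the class fits into an extension of $\mathcal{L}$ by $\mathcal{L}^{-1}$ (or of $\mathcal{L}^{-1}$ by $\mathcal{L}$), so left-exactness of $H^0$ forces $H^0(X, M\otimes E')=0$ as well. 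Therefore the chosen point does not lie in $B$; since it was arbitrary, $B=\emptyset$, and $\Theta$ is base point free.

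I do not expect a genuine obstacle at this stage. All the substantive content — the degeneration of a stable bundle to a split bundle $\mathcal{L}\oplus\mathcal{L}^{-1}$ along an irreducible curve in $R^{ss}$, the choice of $M$ for the split bundle in Case~1, and the semicontinuity together with the Grauert-type argument that propagates the vanishing over the entire curve — has already been carried out in the proof of Proposition~\ref{basepointprop}. The only point that warrants care is making sure the version of Beauville's criterion being invoked is precisely the identification of the base locus of $\Theta$ with the locus of bundles whose every twist by $\text{Pic}^1(X)$ has a nonzero section; once that is in hand the corollary is immediate.
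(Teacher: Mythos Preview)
Your proposal is correct and follows exactly the paper's approach: invoke Beauville's identification of the base locus of $\Theta$ with the set of classes $E$ satisfying $H^0(M\otimes E)\neq 0$ for every $M\in\text{Pic}^1(X)$, and then observe that Proposition~\ref{basepointprop} empties this set. Your extra paragraph checking independence of the representative within an S-equivalence class is more care than the paper spells out, but it is harmless and the argument is the same.
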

\section{proof of the main theorem}
The proof of Theorem \eqref{mainthm} is based on the following lemma :
\begin{lemma}
	\label{bezout}
	Let $x$ be a point in $SU_{X}(2)$. Let $D$ be a reduced, and irreducible projective curve contained in $SU_{X}(2)$ which passes through $x$. Then there exists an effective divisor in the linear system $\lvert\Theta \rvert$ satisfying the following : 
	\begin{altenumerate}
		\item [{ \rm(1)}] The divisor $E$ contains $x$, i.e., $x \in D \cap E$.
		\item [{ \rm(2)}] The curve $D$ is not contained in $E$.
	\end{altenumerate}
\end{lemma}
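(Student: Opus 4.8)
The plan is to deduce the lemma from the base point freeness of $\Theta$ established in Corollary \ref{basepoint}, together with the fact that $\Theta$ is \emph{ample}, being the ample generator of $\mathrm{Pic}(SU_{X}(2))\cong\mathbb{Z}$. Since $\Theta$ is globally generated, the complete linear system $|\Theta|$ induces a morphism
\[
\phi\colon SU_{X}(2)\longrightarrow \mathbb{P}\bigl(H^0(SU_{X}(2),\Theta)^{\vee}\bigr)=:\mathbb{P}^N,\qquad \phi^{*}\mathcal{O}_{\mathbb{P}^N}(1)\cong\Theta.
\]
The first step is to observe that $\phi$ does not contract $D$ to a point: if $\phi(D)$ were a single point, then $\Theta\cdot D=\phi^{*}\mathcal{O}_{\mathbb{P}^N}(1)\cdot D=0$, contradicting the ampleness of $\Theta$. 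Hence $\phi(D)$ is a curve, in particular a positive-dimensional subvariety of $\mathbb{P}^N$ passing through $\phi(x)$.

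The second step is a dimension count in the dual projective space $(\mathbb{P}^N)^{\vee}$. The hyperplanes of $\mathbb{P}^N$ through $\phi(x)$ form a hyperplane of $(\mathbb{P}^N)^{\vee}$, hence an $(N-1)$-dimensional family, whereas the hyperplanes containing the positive-dimensional variety $\phi(D)$ form a linear subspace of $(\mathbb{P}^N)^{\vee}$ of dimension at most $N-2$. Therefore there is a hyperplane $H\subset\mathbb{P}^N$ with $\phi(x)\in H$ and $\phi(D)\not\subseteq H$. Setting $E:=\phi^{*}H$ gives an effective divisor in $|\Theta|$ (since $\phi^{*}\mathcal{O}_{\mathbb{P}^N}(1)\cong\Theta$) with $x\in E$ because $\phi(x)\in H$, and $D\not\subseteq E$ because $\phi(D)\not\subseteq H$, which is exactly the statement of the lemma. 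Equivalently, the same argument can be phrased with sections: the restriction map realizes $\mathrm{im}\bigl(H^0(SU_{X}(2),\Theta)\to H^0(D,\Theta|_D)\bigr)$ as a base point free linear series on $D$ of positive degree $\Theta\cdot D$, so this image has projective dimension at least $1$ and its kernel has codimension at least $2$ in $H^0(SU_{X}(2),\Theta)$; since the sections vanishing at $x$ form a subspace of codimension exactly $1$, some section vanishes at $x$ but not identically on $D$, and its divisor is the desired $E$.

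I do not anticipate a genuine obstacle: as the introduction already indicates, once base point freeness of $\Theta$ is in hand the argument is essentially formal. The only point that requires a moment's care is the non-contraction of $D$ under $\phi$ — equivalently, that the linear series cut out on $D$ by $|\Theta|$ is non-constant — which is precisely where the ampleness of the generator $\Theta$ enters; everything else is a routine dimension count that is valid for every $N\ge 1$.
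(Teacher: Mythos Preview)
Your argument is correct and is essentially the same as the paper's: both use base point freeness of $\Theta$ to get the morphism $\phi$ to projective space, invoke ampleness to rule out $\phi(D)$ being a point, and then make the codimension-one versus codimension-two comparison (you phrase it via hyperplanes in $(\mathbb{P}^N)^{\vee}$, the paper via the strict inclusion $\{s:s(D)=0\}\subsetneq\{s:s(x)=0\}$). The two presentations are interchangeable, as you yourself note at the end of your second step.
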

\begin{proof}
Since $\Theta$ is globally generated by Corollary \eqref{basepoint}, the natural evaluation map
\begin{equation}
	\label{basepointmap}
H^0(\Theta) \rightarrow \Theta_{x} 
\end{equation}	
is surjective, where $\Theta_{x}$ is the fiber of $\Theta$ at $x$. Let $K = \{s \in H^0(\Theta) : s(x) =0\}$ be the kernel of the morphism \eqref{basepointmap}. Then $\text{dim}(K) = \text{dim}(H^0(\Theta)) -1$. We have $\text{dim}(H^0(\Theta)) >1$, otherwise the unique section $s$ has to be a non vanishing section which will induce $\Theta \cong \mathcal{O}_{SU_{X}(2)}$. This is a contradiction. Thus any section in $K$ will define an effective divisor $E$ containing $x$.\\

To prove $(2)$ it is enough to prove that the following is a strict inclusion
\[
\{s \in H^0(\Theta) : s(D) =0\} \subseteq \{s \in H^0(\Theta) : s(x) =0\} =K.
\] 
If it is an equality, we consider the following map defined by the linear system $\lvert \Theta \rvert$
\[
\phi : SU_{X}(2) \rightarrow \mathbb{P}(H^0(\Theta))
\]
\[
y \rightarrow [s_1(y), s_2(y), \cdots, s_{n-1}(y), s(y)],
\]
where $\{s_1, s_2, \cdots, s_{n-1}\}$ span the vector space $K$. The map $\phi_{\vert D}$ is the constant map $[0, 0, \cdots, 1]$. Hence $\Theta \cdot D = 0$ which contradicts the ampleness of $\Theta$.
\end{proof}
Let $V$ and $W$ be two subvarieties of complimentary dimension in a projective variety $Y$. We will assume $V$ and $W$ intersect properly. Let $P$ be a point of intersection. Then the intersection product $V \cdot W$ is equal to $\sum_{P \in V \cap W} m_P(V, W)$, where $m_P(V, W)$ is the intersection multiplicity at $P$. Let $m_P(V)$, and $m_P(W)$ be the multiplicity at the point $P$ of $V$, and $W$ respectively. Then in general one has the inequality $m_P(V, W) \ge m_P(V) \cdot m_P(W)$. The equality occurs if $\mathbb{T}C_{P}V \cap \mathbb{T}C_{P}W = \phi$ in $\mathbb{P}T_P Y$, where $\mathbb{T}C_{P}V$, and $\mathbb{T}C_{P}W$ are the projectivized tangent cones [\cite{MR3617981}, proposition 1.29., page 36]. Thus for any given point of intersection $P$ one has the following inequality
\begin{equation}
	\label{bezoutinequality}
	V \cdot W \ge m_P(V) \cdot m_P(W).
\end{equation}

$\textbf{Proof of Theorem 1.2}$ Let $x$ be a point in $SU_{X}(2)$. Let $D$ be a reduced and irreducible projective curve contained in $SU_{X}(2)$ which passes through $x$. Then, by Lemma \eqref{bezout} there exists a divisor $E$ in the linear system $\lvert\Theta \rvert$ satisfying $(1)$ and $(2)$ of Lemma \eqref{bezout}. Hence, by  \eqref{bezoutinequality} we will have
\[
D \cdot E \ge \text{mult}_x D \cdot \text{mult}_x E.
\]
 From \eqref{cantheta}, the canonical bundle $K_{SU_{X}(2)}$ satisfies the following relation with $\Theta$: 
\[
-K_{SU_{X}(2)} \cong \Theta^4
\]
Thus we have 
\[
\frac{D \cdot (-K_{SU_{X}(2)})}{\text{mult}_x D} = \frac{4 (D \cdot \Theta)}{\text{mult}_x D} = \frac{4 (D \cdot E)}{\text{mult}_x D} \ge 4 \cdot \text{mult}_x E \ge 4.
\]
\renewcommand{\abstractname}{Acknowledgements}
\begin{abstract}
	I am grateful to Prof. V. Balaji for posing this question to me, and to Prof. Krishna Hanumanthu for carefully reading this note and suggesting several improvements.
\end{abstract}
\bibliographystyle{alphaurl}
\bibliography{A_note_on_a_theorem_of_Narasimhan_Ramanan.bib}	

\begin{thebibliography}{MRM08}

\bibitem[Bea94]{beauville1994vector}
Arnaud Beauville.
\newblock Vector bundles on curves and generalized theta functions: recent
  results and open problems, 1994.
\newblock \href {http://arxiv.org/abs/alg-geom/9404001}
  {\path{arXiv:alg-geom/9404001}}.

\bibitem[BS09]{BAUER20092134}
Thomas Bauer and Tomasz Szemberg.
\newblock Seshadri constants and the generation of jets.
\newblock {\em Journal of Pure and Applied Algebra}, 213(11):2134--2140, 2009.
\newblock URL:
  \url{https://www.sciencedirect.com/science/article/pii/S0022404909000656},
  \href {https://doi.org/https://doi.org/10.1016/j.jpaa.2009.03.005}
  {\path{doi:https://doi.org/10.1016/j.jpaa.2009.03.005}}.

\bibitem[Deb04]{MR2112583}
Olivier Debarre.
\newblock Seshadri constants of abelian varieties.
\newblock In {\em The {F}ano {C}onference}, pages 379--394. Univ. Torino,
  Turin, 2004.

\bibitem[DN89]{MR0999313}
J.-M. Drezet and M.~S. Narasimhan.
\newblock Groupe de {P}icard des vari\'{e}t\'{e}s de modules de fibr\'{e}s
  semi-stables sur les courbes alg\'{e}briques.
\newblock {\em Invent. Math.}, 97(1):53--94, 1989.
\newblock \href {https://doi.org/10.1007/BF01850655}
  {\path{doi:10.1007/BF01850655}}.

\bibitem[EH16]{MR3617981}
David Eisenbud and Joe Harris.
\newblock {\em 3264 and all that---a second course in algebraic geometry}.
\newblock Cambridge University Press, Cambridge, 2016.
\newblock \href {https://doi.org/10.1017/CBO9781139062046}
  {\path{doi:10.1017/CBO9781139062046}}.

\bibitem[Las96]{MR1418944}
Yves Laszlo.
\newblock Local structure of the moduli space of vector bundles over curves.
\newblock {\em Comment. Math. Helv.}, 71(3):373--401, 1996.
\newblock \href {https://doi.org/10.1007/BF02566426}
  {\path{doi:10.1007/BF02566426}}.

\bibitem[MRM08]{mumford2008abelian}
D.~Mumford, C.P. Ramanujam, and I.U.I. Manin.
\newblock {\em Abelian Varieties}.
\newblock Studies in mathematics. Hindustan Book Agency, 2008.
\newblock URL: \url{https://books.google.co.in/books?id=J81jPwAACAAJ}.

\bibitem[New78]{MR546290}
P.~E. Newstead.
\newblock {\em Introduction to moduli problems and orbit spaces}, volume~51 of
  {\em Tata Institute of Fundamental Research Lectures on Mathematics and
  Physics}.
\newblock Tata Institute of Fundamental Research, Bombay; Narosa Publishing
  House, New Delhi, 1978.

\bibitem[NR69]{MR0242185}
M.~S. Narasimhan and S.~Ramanan.
\newblock Moduli of vector bundles on a compact {R}iemann surface.
\newblock {\em Ann. of Math. (2)}, 89:14--51, 1969.
\newblock \href {https://doi.org/10.2307/1970807} {\path{doi:10.2307/1970807}}.

\end{thebibliography}
	


\vspace{30mm}
\begin{flushleft}
{\scshape Tata Institute of Fundamental Research, Mumbai, Dr. Homi Bhabha Road, Navy Nagar, Colaba, Mumbai, Maharashtra 400005.}

{\fontfamily{cmtt}\selectfont
\textit{Email address :jagadish@math.tifr.res.in}}
\end{flushleft}
\end{document}